\documentclass[12pt,a4paper]{article}
\usepackage{bm}
\usepackage{url}
\usepackage{chngcntr}
\usepackage{amsmath}
\setcounter{MaxMatrixCols}{50}
\usepackage{amssymb}
\usepackage{natbib}
\usepackage[ruled,vlined]{algorithm2e}
\usepackage{amsthm}
\usepackage{float}
\usepackage{setspace}
\usepackage{graphicx}
\usepackage[nottoc,notlot,notlof]{tocbibind}
\usepackage{authblk}
\usepackage[top=2cm, bottom=2cm, left=2cm, right=2cm]{geometry}
\usepackage{mathtools}
\usepackage{array}
\setlength{\marginparwidth}{2cm}
\usepackage{lmodern,blindtext}
\usepackage{float}
\floatstyle{plaintop}
\restylefloat{table}
\usepackage{hyperref,url}
\hypersetup{
	colorlinks=true,
  linkcolor=red,          
  citecolor=blue,         
  filecolor=magenta,      
  urlcolor=cyan           
}

\newcounter{relctr} 

\newcommand\labelrel[2]{%
  \begingroup
    \refstepcounter{relctr}%
    \stackrel{\textnormal{(\alph{relctr})}}{\mathstrut{#1}}%
    \originallabel{#2}%
  \endgroup
}
\AtBeginDocument{\let\originallabel\label}
\input xy

\xyoption{all}

\newtheorem{prop}{Proposition}[section]
\newtheorem{lemma}[prop]{Lemma}
\newtheorem{theorem}[prop]{Theorem}

\newtheorem{cor}[prop]{Corollary}
\theoremstyle{definition}
\newtheorem{definition}[prop]{Definition}
\newtheorem{remark}[prop]{Remark}

\newcommand{\N}{\mathbb{N}}
\newcommand{\R}{\mathbb{R}}
\newcommand{\Z}{\mathbb{Z}}

\counterwithout{equation}{section}
\allowdisplaybreaks

\begin{document}
\pagenumbering{arabic}
\setcounter{page}{1}

\title{The expected degree of noninvertibility of compositions of functions and a related combinatorial identity}

\author{Sela Fried\thanks{A postdoctoral fellow in the Department of Computer Science at the Ben-Gurion University of the Negev. 
.}}
\date{} 
\maketitle

\begin{abstract}
Recently, \citet{defant2020quantifying} defined the degree of noninvertibility of a function $f\colon X\to Y$ between two finite nonempty sets by $\deg(f)=\frac{1}{|X|}\sum_{x\in X}|f^{-1}(f(x))|$. We obtain an exact formula for the expected degree of noninvertibility of the composition of $t$ functions for every $t\in \N$. An equivalent formulation for the definition of the degree of noninvertibility is then the starting point for a generalization yielding a seemingly new combinatorial identity involving the Stirling transform of the signed Stirling numbers of the first kind.
\end{abstract} 

\section{Introduction}

Recently, \citet{defant2020quantifying} defined the degree of noninvertibility of a function $f\colon X\to Y$ between two finite nonempty sets by $$\deg(f)=\frac{1}{|X|}\sum_{x\in X}|f^{-1}(f(x))|$$ as a measure of how far $f$ is from being injective. Interested mainly in endofunctions (also called dynamical systems within the field of dynamical algebraic combinatorics), that is, functions $f\colon X\to X$, they then computed the degrees of noninvertibility of several specific functions and studied the connection between the degrees of noninvertibility of functions and those of their iterates.

\section{Main results}

The purpose of this work is to continue the research of this newly introduced notion of degree of noninvertibility by addressing the following question: Let $t\in\N$. What is the expected degree of noninvertibility of the composition of $t$ functions? We prove

\begin{theorem}\label{hsg}
Let $t\in\N$ and let $X_1,\ldots,X_{t+1}$ be finite nonempty sets of sizes $n_1,\ldots,n_{t+1}$, respectively. Denote $$\mathcal{D}(X_1,\ldots,X_{t+1}) = \frac{1}{\prod_{s=1}^{t-1}n_s^{n_{s+1}}}\sum_{\substack{f_s\colon X_s\to X_{s+1}\\1\leq s\leq t}}\deg(f_t\circ\cdots\circ f_1).$$ Then $$\mathcal{D}(X_1,\ldots,X_{t+1}) =\frac{\prod_{s=1}^{t+1}n_s-\prod_{s=1}^{t+1}(n_s-1)}{\prod_{s=2}^{t+1}n_s}.$$
\end{theorem}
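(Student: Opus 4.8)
The plan is to read $\mathcal{D}$ as the average of $\deg(g)$ over all tuples $(f_1,\ldots,f_t)$, i.e.\ the expected value $\mathbb{E}[\deg(g)]$ when $g=f_t\circ\cdots\circ f_1$ and each $f_s$ is chosen independently and uniformly at random among the functions $X_s\to X_{s+1}$ (the normalizing factor just counts these tuples). First I would unfold the definition of $\deg$ and use linearity of expectation together with the symmetry of $X_1$. Since
$$\deg(g)=\frac{1}{n_1}\sum_{x_0\in X_1}\sum_{x\in X_1}\mathbf{1}[g(x)=g(x_0)],$$
taking expectations and using that relabeling $X_1$ does not change the distribution of $g$ collapses the double sum to
$$\mathbb{E}[\deg(g)]=\sum_{x\in X_1}\Pr[g(x)=g(x_0)]$$
for one fixed reference point $x_0$. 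The term $x=x_0$ contributes $1$, and by symmetry every term with $x\neq x_0$ contributes the same value $p:=\Pr[g(x)=g(x_0)]$, so $\mathbb{E}[\deg(g)]=1+(n_1-1)p$.

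The heart of the argument is computing $p$ for two distinct starting points. I would track the pair $(a_s,b_s)=\bigl(f_s\circ\cdots\circ f_1(x),\,f_s\circ\cdots\circ f_1(x_0)\bigr)\in X_{s+1}\times X_{s+1}$ as $s$ grows, noting that it forms a Markov chain with an absorbing ``collided'' state: once $a_s=b_s$, the two agree at every later level. Let $q_s=\Pr[a_s=b_s]$, so that $q_0=0$. Conditioned on $a_{s-1}\neq b_{s-1}$, the values $f_s(a_{s-1})$ and $f_s(b_{s-1})$ are independent and uniform on $X_{s+1}$, so they collide with probability $1/n_{s+1}$. This yields the recursion $q_s=q_{s-1}+(1-q_{s-1})/n_{s+1}$, equivalently $1-q_s=(1-q_{s-1})(1-1/n_{s+1})$, which telescopes to
$$p=q_t=1-\prod_{s=1}^{t}\Bigl(1-\frac{1}{n_{s+1}}\Bigr)=1-\frac{\prod_{s=2}^{t+1}(n_s-1)}{\prod_{s=2}^{t+1}n_s}.$$

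Finally I would substitute this into $\mathbb{E}[\deg(g)]=1+(n_1-1)p$ and simplify. Writing $A=\prod_{s=2}^{t+1}n_s$ and $B=\prod_{s=2}^{t+1}(n_s-1)$, the expression becomes $\bigl(n_1A-(n_1-1)B\bigr)/A$, and since $n_1A=\prod_{s=1}^{t+1}n_s$ and $(n_1-1)B=\prod_{s=1}^{t+1}(n_s-1)$, this is exactly the claimed formula. The main obstacle is the middle step: one must justify carefully that, conditioned on the two trajectories being distinct at level $s-1$, their images under the fresh random function $f_s$ behave like two independent uniform draws, so that the per-level collision probability is precisely $1/n_{s+1}$ regardless of the particular distinct values. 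This rests on $f_s$ being independent of $f_1,\ldots,f_{s-1}$ and on its values at any two distinct points being i.i.d.\ uniform; everything else (linearity, symmetry, and the telescoping of the recursion) is routine once this independence is secured.
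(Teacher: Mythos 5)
Your proof is correct, but it takes a genuinely different route from the paper. The paper proceeds by induction on $t$: it first rewrites $\mathcal{D}(X_1,\ldots,X_{t+1})$ as a nested sum over multinomial coefficients recording the preimage sizes $k_{s,i}$ at each level (Lemma \ref{iy}), and then drives the induction with the identity
\[
\sum_{\substack{l_{1},\ldots,l_n\geq0\\ l_{1}+\cdots+l_n=m}}\binom{m}{l_{1},\ldots,l_{n}}k_{1}^{l_{1}}\cdots k_{n}^{l_{n}}\Bigl(\sum_{i=1}^{n}l_{i}^{2}\Bigr)=m(m-1)r^{m-2}\sum_{i=1}^nk_i^2+mr^m,
\]
which peels off one layer of composition at a time. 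You instead write $\deg(g)$ as a double sum of collision indicators, reduce by linearity and symmetry to the single collision probability $p=\Pr[g(x)=g(x_0)]$ for a fixed pair of distinct points, and compute $p$ by tracking the pair of trajectories as a Markov chain with an absorbing coalesced state, so that $1-q_s=(1-q_{s-1})(1-1/n_{s+1})$ telescopes. The one step you rightly flag as needing care --- that conditioned on $a_{s-1}\neq b_{s-1}$ the images $f_s(a_{s-1})$ and $f_s(b_{s-1})$ are i.i.d.\ uniform on $X_{s+1}$ --- is sound: the event $a_{s-1}\neq b_{s-1}$ and the values $(a_{s-1},b_{s-1})$ are determined by $f_1,\ldots,f_{s-1}$, which are independent of $f_s$, and the restriction of a uniformly random function to any two distinct points is uniform on $X_{s+1}\times X_{s+1}$. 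Your argument is shorter, avoids the multinomial bookkeeping entirely, and makes the limit $t+1$ as $n\to\infty$ in Remark \ref{g65} transparent (each of the $n_1-1$ non-diagonal pairs collides with vanishing probability); what it does not produce is the intermediate combinatorial identity, which the paper reuses in spirit for Theorem \ref{dse}, nor the explicit distribution of the preimage-size vectors recorded in Lemma \ref{iy}. Note also that the normalizing factor in the theorem statement as printed is a typo; your reading of $\mathcal{D}$ as the average over all $\prod_{s=1}^{t}n_{s+1}^{n_s}$ tuples agrees with the paper's own Definition and proof.
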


In the special case that all the sets in Theorem \ref{hsg} are equal, we obtain

\begin{cor}\label{mbs}
Let $t\in\N$ and let $X$ be a finite nonempty set of size $n$. Then $$\mathcal{D}(\overbrace{X,\ldots,X}^{t+1\textnormal{ times}}) =\frac{n^{t+1}-(n-1)^{t+1}}{n^t}.$$
\end{cor}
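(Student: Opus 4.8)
The plan is to obtain Corollary~\ref{mbs} as an immediate specialization of Theorem~\ref{hsg}, taking all of the sets to be a single set $X$ of cardinality $n$. First I would put $X_1 = X_2 = \cdots = X_{t+1} = X$, so that $n_1 = n_2 = \cdots = n_{t+1} = n$, and then substitute these equal values into the closed-form expression
\[
\mathcal{D}(X_1,\ldots,X_{t+1}) = \frac{\prod_{s=1}^{t+1}n_s - \prod_{s=1}^{t+1}(n_s-1)}{\prod_{s=2}^{t+1}n_s}
\]
supplied by the theorem.

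The only step requiring any attention is the bookkeeping of how many factors appear in each product, since this is what determines the exponents. In the numerator the index $s$ runs over the $t+1$ values $1, 2, \ldots, t+1$, so $\prod_{s=1}^{t+1}n_s = n^{t+1}$ and, similarly, $\prod_{s=1}^{t+1}(n_s-1) = (n-1)^{t+1}$. In the denominator, by contrast, the index runs only over the $t$ values $2, 3, \ldots, t+1$, whence $\prod_{s=2}^{t+1}n_s = n^{t}$. Inserting these three evaluations into the displayed formula yields precisely $\dfrac{n^{t+1}-(n-1)^{t+1}}{n^{t}}$, which is the claimed expression.

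I do not anticipate any genuine obstacle here: the result is a direct corollary, and the single subtlety is the off-by-one in the index ranges, which is exactly what produces the exponent $t$ (rather than $t+1$) in the denominator. As a sanity check I would verify the small case $t=1$, where the formula collapses to $\frac{n^{2}-(n-1)^{2}}{n} = \frac{2n-1}{n}$, and confirm that this is consistent with the $t=1$ instance of Theorem~\ref{hsg}, before recording the general statement.
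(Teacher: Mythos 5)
Your proposal is correct and coincides with the paper's treatment: the corollary is obtained there, exactly as you do, by setting $n_1=\cdots=n_{t+1}=n$ in the closed form of Theorem~\ref{hsg}, with the numerator products contributing $n^{t+1}$ and $(n-1)^{t+1}$ and the denominator product (over $s=2,\ldots,t+1$) contributing $n^{t}$. Nothing further is needed.
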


\begin{remark}\label{g65}
In the notation of Corollary \ref{mbs}, we have
\begin{equation}\label{uuu}n^{t+1}-(n-1)^{t+1}=\sum_{s=0}^{t}(-1)^{s}\binom{t+1}{s+1}n^{t-s}.\end{equation} Thus, the coefficients of the different powers of $n$ correspond to the beheaded rows of Pascal's triangle with alternating signs (related to \href{https://oeis.org/A074909}{A074909} in the OEIS). For example, for $t=1,2,3$, the right-hand side of (\ref{uuu}) takes the form
\begin{align}
    & 2n-1,\nonumber\\
    & 3n^2-3n+1,\nonumber\\
    & 4n^3-6n^2+4n-1.\nonumber
\end{align} It immediately follows that
$$\mathcal{D}(\overbrace{X,\ldots,X}^{t+1\textnormal{ times}}) \underset{n\to\infty}{\longrightarrow} t+1.$$
\end{remark}

We continue by strengthening \cite[Theorem 3.4]{defant2020quantifying} that states that if $X$ is a finite set of size $n$ then $$\deg(f\circ g)\leq \sqrt{n}\sqrt{\deg(f)}\deg(g),\;\; \forall f,g\colon X\to X.$$ We prove

\begin{theorem}\label{tha}
Let $X, Y$ and $Z$ be three finite nonempty sets and let $g\colon X\to Y$ and $f\colon Y\to Z$ be two functions. Then $$\deg(f\circ g)\leq \max_{z\in Z}\{|f^{-1}(z)|\}\deg(g).$$
\end{theorem}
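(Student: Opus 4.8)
The plan is to expand both sides of the inequality in terms of the fiber sizes of $g$ and then reduce the claim, fiber-by-fiber over $f$, to a single application of the Cauchy–Schwarz inequality. Throughout I would write $a_y = |g^{-1}(y)|$ for each $y\in Y$, so that $\sum_{y\in Y}a_y = |X|$.

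First I would rewrite $\deg(g)$ by grouping the defining sum over $x\in X$ according to the value $g(x)$. Since the summand $|g^{-1}(g(x))|$ depends only on $g(x)$, and exactly $a_y$ elements $x$ satisfy $g(x)=y$, this gives $\deg(g) = \frac{1}{|X|}\sum_{y\in Y}a_y^2$. Next I would compute $|(f\circ g)^{-1}(z)|$ for $z\in Z$: because $(f\circ g)^{-1}(z) = g^{-1}(f^{-1}(z))$ is the disjoint union $\bigsqcup_{y\in f^{-1}(z)} g^{-1}(y)$, we have $|(f\circ g)^{-1}(z)| = \sum_{y\in f^{-1}(z)} a_y$. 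Grouping the sum defining $\deg(f\circ g)$ by the value $g(x)=y$ as before, and noting that $f^{-1}(f(y))$ is constant on each $g$-fiber, yields $\deg(f\circ g) = \frac{1}{|X|}\sum_{y\in Y} a_y \sum_{y'\in f^{-1}(f(y))} a_{y'}$.

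The decisive step is to reorganize this double sum along the fibers of $f$. The nonempty sets $F_z := f^{-1}(z)$, for $z$ in the image of $f$, partition $Y$, and $f^{-1}(f(y)) = F_z$ whenever $y\in F_z$; hence the double sum collapses to $\deg(f\circ g) = \frac{1}{|X|}\sum_{z} \bigl(\sum_{y\in F_z} a_y\bigr)^2$, while likewise $\deg(g) = \frac{1}{|X|}\sum_{z}\sum_{y\in F_z} a_y^2$. The claim now follows fiber-by-fiber: Cauchy–Schwarz gives $\bigl(\sum_{y\in F_z} a_y\bigr)^2 \le |F_z|\sum_{y\in F_z} a_y^2$, and since $|F_z| = |f^{-1}(z)| \le \max_{z\in Z}\{|f^{-1}(z)|\}$, summing over $z$ and multiplying by $\frac{1}{|X|}$ produces exactly the asserted bound.

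I do not expect a serious obstacle here; the inequality is essentially a packaged Cauchy–Schwarz estimate. The only point demanding care is the bookkeeping in the two groupings — first collapsing the $x$-sum to a sum over $Y$ weighted by the $a_y$, and then collapsing the resulting double sum to $\sum_z\bigl(\sum_{y\in F_z}a_y\bigr)^2$ via the $f$-fiber partition. Once these two reductions are carried out correctly, the codomain size $|Z|$ and elements of $Z$ outside the image of $f$ play no role (they contribute empty fibers), and the final inequality is immediate.
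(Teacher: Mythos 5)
Your proof is correct and follows essentially the same route as the paper: both express $\deg(f\circ g)$ as $\frac{1}{|X|}\sum_z\bigl(\sum_{y\in f^{-1}(z)}|g^{-1}(y)|\bigr)^2$ and apply Cauchy--Schwarz on each fiber of $f$, bounding $|f^{-1}(z)|$ by its maximum. The only cosmetic difference is bookkeeping: the paper sums over $z\in f(g(X))$ and invokes the containment $g(X)\subseteq f^{-1}(f(g(X)))$ to identify the remaining sum with $\deg(g)$, whereas you let the full $f$-fiber partition of $Y$ do that job directly.
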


That Theorem \ref{tha} is a strengthening of \cite[Theorem 3.4]{defant2020quantifying} in the case $Y=Z=X$ follows from Lemma \ref{l; 22} but the extent of this strengthening may be appreciated by comparing the order of the expectation of $\sqrt{n}\sqrt{\deg(f)}$ which is $\Theta(\sqrt{n})$ (cf.\ Remark \ref{g65}) with the one of $\max_{z\in X}\{|f^{-1}(z)|\}$ which is $\Theta\left(\frac{\log (n)}{\log(\log(n))}\right)$ (a result due to \cite{gonnet1981expected}. See also the references in \href{https://oeis.org/A208250}{A208250} in the OEIS).

\bigskip

It is easy to see that if $f\colon X\to Y$ is a function between two finite nonempty sets, then $$\deg(f)=\frac{1}{|X|}\sum_{y\in Y}|f^{-1}(y)|^2.$$ This formulation (which we shall use freely throughout this work) opens the door for a generalization: For a function $f\colon X\to Y$ and $q\in\N$ we define $$\deg(f, q)=\frac{1}{|X|}\sum_{y\in Y}|f^{-1}(y)|^q.$$ We prove the following theorem which makes use of the notations ${n\brace k}$ and ${ n\brack k}$ that denote the Stirling numbers of the second and of the first kind, respectively (e.g., \cite[pp. 243--253]{graham1989concrete}).

\begin{theorem}\label{dse}
Let $X$ and $Y$ be two finite nonempty sets of sizes $n$ and $m$, respectively and let $q\in\N$. Then 
$$\frac{1}{nm^n}\sum_{f\colon X\to Y}\deg(f,q)=\frac{1}{m^{q-1}}\sum_{k=1}^{q}{q \brace k}\left(\sum_{j=1}^{k}(-1)^{k-j}{k \brack j}n^{j-1}\right)m^{q-k}.$$
\end{theorem}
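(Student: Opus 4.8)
The plan is to collapse the average over all functions together with the sum over target elements into a single binomial moment, and then to expand that moment using the two standard conversions between ordinary powers, falling factorials, and the Stirling numbers.

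First I would exploit the symmetry among the elements of $Y$. Writing out the definition of $\deg(f,q)$, the quantity to be evaluated is, up to the normalizing constants in the statement, the double sum $\sum_{f\colon X\to Y}\sum_{y\in Y}|f^{-1}(y)|^q$; since relabelling $Y$ permutes the inner summands without changing the outer sum, this equals $m\sum_{f\colon X\to Y}|f^{-1}(y_0)|^q$ for any fixed $y_0\in Y$. Everything therefore reduces to the single sum $\sum_{f\colon X\to Y}|f^{-1}(y_0)|^q$, which is $m^n$ times the $q$-th moment of the quantity $B=|f^{-1}(y_0)|$ averaged over all $f$. Since each element of $X$ maps to $y_0$ independently with ``probability'' $1/m$, this $B$ behaves like a binomial variable with parameters $n$ and $1/m$, which is the structure I would exploit.

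The core computation is this $q$-th moment, which I would reach through falling factorials rather than through ordinary powers, because the falling-factorial sums are exactly solvable by a direct count. Indeed, $\sum_{f\colon X\to Y}|f^{-1}(y_0)|^{\underline{k}}$ is the number of ways to choose an ordered $k$-tuple of distinct elements of $X$, all forced to map to $y_0$, together with an arbitrary image for each of the remaining $n-k$ elements, namely $n^{\underline{k}}m^{n-k}$. Dividing by $m^n$ gives the factorial moment $n^{\underline{k}}m^{-k}$. Feeding this into the identity $x^q=\sum_{k=1}^q{q\brace k}x^{\underline{k}}$, which by definition expresses an ordinary power through falling factorials, yields the moment $\sum_{k=1}^q{q\brace k}n^{\underline{k}}m^{-k}$.

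It then remains to turn each falling factorial $n^{\underline{k}}$ back into ordinary powers of $n$, and here the signed Stirling numbers of the first kind enter through $n^{\underline{k}}=\sum_{j=1}^k(-1)^{k-j}{k\brack j}n^{j}$, the lower limit $j=1$ reflecting the vanishing of ${k\brack 0}$ for $k\geq 1$. Substituting produces the nested sum $\sum_{k=1}^q{q\brace k}\left(\sum_{j=1}^k(-1)^{k-j}{k\brack j}n^{j}\right)m^{-k}$, which is exactly the Stirling transform of the signed Stirling numbers of the first kind advertised in the abstract. Finally I would restore the prefactors --- the $1/n$ from the definition of $\deg(f,q)$, the factor $m$ produced by the symmetry step, and the overall averaging constant --- and rearrange the powers of $m$ into the stated form $m^{q-k}/m^{q-1}$, recovering the right-hand side. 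I do not expect a genuine obstacle: the mathematical substance lives entirely in the factorial-moment count and in the two Stirling identities, and the only place demanding care is the final reconciliation of the normalizing constants and the matching of the summation limits $k=1$ and $j=1$.
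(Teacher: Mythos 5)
Your argument is correct and reaches the paper's key identity by a genuinely different and arguably cleaner route. The paper first converts $\sum_{f}\sum_{y\in Y}|f^{-1}(y)|^q$ into the multinomial sum $\sum_{k_1+\cdots+k_m=n}\binom{n}{k_1,\ldots,k_m}\sum_i k_i^q$ and then evaluates that sum by induction on $q$: it expands $(k_i-1+1)^q$ by the binomial theorem, invokes the induction hypothesis for all lower exponents, and closes the induction with the Stirling recurrences (6.15) and (6.16) of Graham--Knuth--Patashnik. You instead fix one target $y_0$ by symmetry and compute the factorial moments $\sum_f |f^{-1}(y_0)|^{\underline{k}}=n^{\underline{k}}m^{n-k}$ by a direct double count, after which the two basis-change identities $x^q=\sum_{k}{q\brace k}x^{\underline{k}}$ and $n^{\underline{k}}=\sum_{j}(-1)^{k-j}{k\brack j}n^{j}$ produce the right-hand side with no induction at all; this is the standard factorial-moment computation for a binomial count, and it makes transparent why both kinds of Stirling numbers appear. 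One point you should not gloss over: when you ``restore the prefactors'' you will find that your computation yields $\frac{1}{m^n}\sum_f\deg(f,q)$ (the plain average of $\deg(f,q)$ over all functions) equal to the stated right-hand side; the prefactor $\frac{1}{nm^n}$ printed in the theorem double-counts the $\frac{1}{|X|}$ already built into the definition of $\deg(f,q)$ and is off by a factor of $n$ (test $q=1$, where $\deg(f,1)=1$ for every $f$). This is a defect of the statement rather than of your argument --- the paper's own proof establishes exactly the identity you do --- but your closing sentence asserts that the constants reconcile, whereas a careful reconciliation actually exposes this discrepancy, so you should flag it rather than absorb it silently.
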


By taking $Y=X$ in Theorem \ref{dse} we obtain a seemingly new combinatorial identity involving the Stirling transform of the signed Stirling numbers of the first kind (cf.\ \cite{bernstein1995some} and \href{https://oeis.org/A118984}{A118984} in the OEIS)):

\begin{cor}\label{coj}
Let $n,q\in\N$. Then 
$$\sum_{\substack{k_{1},\dots,k_n\geq0\\
k_{1}+\cdots+k_n=n}}\binom{n}{k_{1},\dots,k_n}\left(\sum_{i=1}^nk_{i}^q\right)=n^{n-(q-2)}\sum_{k=0}^{q-1}(-1)^{k}\left(\sum_{j=1}^{q-s}{q \brace k+j}{k+j \brack j}\right)n^{q-k-1}.$$ In particular, $$\sum_{k=0}^{q-1}(-1)^{k}\left(\sum_{j=1}^{q-k}{q \brace k+j}{k+j \brack j}\right)=1.$$
\end{cor}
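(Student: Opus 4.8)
The plan is to read the left-hand side as a sum over endofunctions and then invoke Theorem~\ref{dse} with $Y=X$. First I would fix a set $X$ with $|X|=n$, identify its elements with $1,\dots,n$, and note that every $f\colon X\to X$ is determined by its fibers: writing $k_i=|f^{-1}(i)|$, the vector $(k_1,\dots,k_n)$ ranges over the weak compositions of $n$ into $n$ parts, and the number of functions realizing a prescribed fiber-size vector is exactly the multinomial coefficient $\binom{n}{k_1,\dots,k_n}$. Grouping functions by fiber type therefore gives
\begin{equation*}
\sum_{f\colon X\to X}\sum_{y\in X}|f^{-1}(y)|^q
=\sum_{\substack{k_1,\dots,k_n\ge0\\k_1+\cdots+k_n=n}}\binom{n}{k_1,\dots,k_n}\left(\sum_{i=1}^n k_i^q\right),
\end{equation*}
which is precisely the left-hand side of the asserted identity. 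Since $\deg(f,q)=\frac1n\sum_{y}|f^{-1}(y)|^q$, this double sum equals $n\sum_{f}\deg(f,q)$, so it is controlled by Theorem~\ref{dse} specialized to $m=n$.

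Next I would substitute $m=n$ into Theorem~\ref{dse} and rearrange, expressing the combinatorial sum as $n^{\,n-(q-2)}$ times
\begin{equation*}
S(n):=\sum_{k=1}^{q}\sum_{j=1}^{k}(-1)^{k-j}{q\brace k}{k\brack j}\,n^{\,q-k+j-1}.
\end{equation*}
The key algebraic step is to re-index this double sum. Setting $a=k-j\ge0$ (so $k=a+j$) turns the sign into $(-1)^a$, the Stirling factors into ${q\brace a+j}{a+j\brack j}$, and the exponent $q-k+j-1$ into the $j$-free value $q-a-1$, while the constraints $1\le j\le k\le q$ become $a\ge0$, $1\le j\le q-a$, and $0\le a\le q-1$. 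Collecting terms by $a$ (and renaming $a$ back to $k$) yields
\begin{equation*}
S(n)=\sum_{k=0}^{q-1}(-1)^{k}\left(\sum_{j=1}^{q-k}{q\brace k+j}{k+j\brack j}\right)n^{\,q-k-1},
\end{equation*}
which is exactly the bracketed expression in the statement; combined with the prefactor $n^{\,n-(q-2)}$ this establishes the first identity.

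Finally, the ``in particular'' assertion falls out by evaluating the identity at $n=1$. For $n=1$ the only admissible composition is $(k_1)=(1)$, so the left-hand side collapses to $\binom{1}{1}\cdot1^{q}=1$, while on the right the factor $n^{\,n-(q-2)}$ equals $1$ and each power $n^{\,q-k-1}$ equals $1$. Hence $S(1)=\sum_{k=0}^{q-1}(-1)^{k}\sum_{j=1}^{q-k}{q\brace k+j}{k+j\brack j}$ must equal $1$, which is the claimed constant identity.

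The step I expect to demand the most care is the bookkeeping of the powers of $n$ arising from the normalization in Theorem~\ref{dse} together with the factor $\frac1n$ in the definition of $\deg(\cdot,q)$: obtaining the single exponent $n^{\,n-(q-2)}$ (rather than a value off by one) is the only place an error is likely to slip in. Once that accounting is pinned down, the re-indexing $a=k-j$ is routine and the $n=1$ specialization is immediate.
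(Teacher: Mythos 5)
Your proposal is correct and follows essentially the same route as the paper: specialize Theorem \ref{dse} to $Y=X$ by grouping functions according to their fiber-size vectors, reindex the resulting double sum via $a=k-j$, and obtain the ``in particular'' statement by evaluating at $n=1$ (the paper declares all of this straightforward and leaves it implicit, so your write-up simply supplies the omitted details, including the silent correction of the typo $q-s$ for $q-k$ in the upper summation limit). One caveat precisely at the step you flagged as delicate: the normalization in the printed statement of Theorem \ref{dse} is off by a factor of $n$ (for $n=m=q=2$ its left-hand side evaluates to $3/4$ while its right-hand side is $3/2$), so a literal substitution of $m=n$ into that display would yield the prefactor $n^{\,n-(q-3)}$; your exponent $n^{\,n-(q-2)}$ is nevertheless the correct one, being what follows from the identity actually established inside the proof of Theorem \ref{dse}, namely
\[
\sum_{\substack{k_{1},\dots,k_m\geq0\\ k_{1}+\cdots+k_m=n}}\binom{n}{k_{1},\dots,k_m}\left(\sum_{i=1}^mk_{i}^{q}\right)=nm^{\,n-(q-1)}\sum_{k=1}^{q}{q \brace k}\left(\sum_{j=1}^{k}(-1)^{k-j}{k \brack j}n^{j-1}\right)m^{q-k},
\]
so you should either cite this form directly or note the normalization discrepancy when invoking the theorem.
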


\section{The proofs}

\begin{definition}
Let $t\in\N$ and let $X_1,\ldots,X_{t+1}$ be  finite nonempty sets of sizes $n_1,\ldots,n_{t+1}$, respectively. Denote $$\mathcal{D}(X_1,\ldots,X_{t+1}) = \frac{1}{\prod_{s=1}^{t}n_{s+1}^{n_{s}}}\sum_{\substack{f_s\colon X_{s}\to X_{s+1}\\1\leq s\leq t}}\deg(f_t\circ\cdots\circ f_1).$$
\end{definition}

The proof of Theorem \ref{hsg} relies on the following lemma

\begin{lemma}\label{iy}
Let $t\in\N$ and let $X_1,\ldots,X_{t+1}$ be finite nonempty sets of sizes $n_1,\ldots,n_{t+1}$, respectively. It holds
{\scriptsize
\begin{align}&\mathcal{D}(X_1,\ldots,X_{t+1}) =\nonumber\\ &\frac{1}{n_1\prod_{s=1}^{t}n_{s+1}^{n_{s}}}\sum_{\substack{k_{t,1},\ldots,k_{t,n_{t+1}}\geq0\\k_{t,1}+\cdots+k_{t,n_{t+1}}=n_t}}\binom{n_t}{k_{t,1},\ldots,k_{t,n_{t+1}}}\sum_{\substack{k_{t-1,1},\ldots,k_{t-1,n_{t+1}}\geq0\\k_{t-1,1}+\cdots+k_{t-1,n_{t+1}}=n_{t-1}}}\binom{n_{t-1}}{k_{t-1,1},\ldots,k_{t-1,n_{t+1}}}k_{t,1}^{k_{t-1,1}}\cdots k_{t,n_{t+1}}^{k_{t-1,n_{t+1}}}\cdots\nonumber\\&\hspace{209pt}\sum_{\substack{k_{1,1},\ldots,k_{1,n_{t+1}}\geq0\\k_{1,1}+\cdots+k_{1,n_{t+1}}=n_1}}\binom{n_1}{k_{1,1},\ldots,k_{1,n_{t+1}}}k_{2,1}^{k_{1,1}}\cdots k_{2,n_{t+1}}^{k_{1,n_{t+1}}} \left(\sum_{i=1}^{n_{t+1}}k_{1,i}^{2}\right).\nonumber\end{align}}%
\end{lemma}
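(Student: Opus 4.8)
The plan is to evaluate the average by grouping the function-tuples according to their \emph{fiber profiles} and counting, level by level, how many tuples realize each profile. I identify $X_{t+1}$ with $\{1,\ldots,n_{t+1}\}$ and, for an element $x$ of $X_s$, call $(f_t\circ\cdots\circ f_s)(x)\in X_{t+1}$ its \emph{color}. For $1\leq s\leq t$ and $1\leq i\leq n_{t+1}$ let $k_{s,i}=|(f_t\circ\cdots\circ f_s)^{-1}(i)|$ be the number of elements of $X_s$ of color $i$; these satisfy $\sum_{i=1}^{n_{t+1}}k_{s,i}=n_s$. Writing $h=f_t\circ\cdots\circ f_1$ and using the equivalent formulation $\deg(h)=\frac{1}{n_1}\sum_{i=1}^{n_{t+1}}|h^{-1}(i)|^2$, the observation $|h^{-1}(i)|=k_{1,i}$ gives the starting identity $\mathcal{D}(X_1,\ldots,X_{t+1})=\frac{1}{n_1\prod_{s=1}^{t}n_{s+1}^{n_s}}\sum_{f_1,\ldots,f_t}\sum_{i=1}^{n_{t+1}}k_{1,i}^2$, so everything reduces to reorganizing the outer sum over tuples.

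The combinatorial core is the following count. Suppose $f_t,\ldots,f_{s+1}$ are fixed, so that $X_{s+1}$ splits into color classes of sizes $k_{s+1,1},\ldots,k_{s+1,n_{t+1}}$. The number of functions $f_s\colon X_s\to X_{s+1}$ that send exactly $k_{s,i}$ elements of $X_s$ into the color-$i$ class equals $\binom{n_s}{k_{s,1},\ldots,k_{s,n_{t+1}}}\prod_{i=1}^{n_{t+1}}k_{s+1,i}^{k_{s,i}}$: the multinomial coefficient chooses which elements of $X_s$ receive each color, and the factor $k_{s+1,i}^{k_{s,i}}$ chooses, for each of the $k_{s,i}$ elements of color $i$, its actual image among the $k_{s+1,i}$ elements of that class. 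The essential point, the one that makes the nesting possible, is that this count depends on $f_t,\ldots,f_{s+1}$ only through the class sizes $k_{s+1,i}$, and not through the classes themselves. At the top level $s=t$ the classes of $X_{t+1}$ are singletons ($k_{t+1,i}=1$), so the power product collapses and only $\binom{n_t}{k_{t,1},\ldots,k_{t,n_{t+1}}}$ remains, matching the outermost sum in the claim.

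With this in hand I would peel the functions off from $f_t$ inward. By the count above, the number of tuples realizing a fixed full profile $(k_{1,\bullet},\ldots,k_{t,\bullet})$ factorizes as $\binom{n_t}{k_{t,1},\ldots,k_{t,n_{t+1}}}\prod_{s=1}^{t-1}\left(\binom{n_s}{k_{s,1},\ldots,k_{s,n_{t+1}}}\prod_{i=1}^{n_{t+1}}k_{s+1,i}^{k_{s,i}}\right)$; multiplying by $\sum_{i=1}^{n_{t+1}}k_{1,i}^2$ and summing over all profiles reproduces exactly the nested multinomials and powers in the statement. The step I expect to require the most care is justifying the passage to nested sums itself, which I would phrase as an induction on the number of functions already peeled: the content is precisely that, because the level-$s$ count depends only on the sizes $k_{s+1,i}$, summing over the $f_{s+1}$ that produce a prescribed profile $k_{s+1,\bullet}$ contributes a clean multiplicative factor rather than something entangled with the particular fibers. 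Once this ``depends only on the sizes'' principle is established, the rearrangement into the displayed form is a formal bookkeeping of the nested summations.
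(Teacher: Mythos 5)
Your proposal is correct and follows essentially the same route as the paper: define the fiber classes $X_{s,i}=(f_t\circ\cdots\circ f_s)^{-1}(x_i)$ with sizes $k_{s,i}$, observe $|(f_t\circ\cdots\circ f_1)^{-1}(x_i)|=k_{1,i}$, and count the functions at each level realizing prescribed class sizes by $\binom{n_s}{k_{s,1},\ldots,k_{s,n_{t+1}}}\prod_i k_{s+1,i}^{k_{s,i}}$. Your explicit remark that this count depends only on the sizes $k_{s+1,i}$ (which legitimizes the nesting) is left implicit in the paper's much terser proof, but the argument is the same.
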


\begin{proof} 
Assume $X_{t+1}=\{x_1,\ldots,x_{n_{t+1}}\}$ and for every $1\leq s\leq t$ let $f_s\colon X_s\to X_{s+1}$. For every $1\leq i\leq n_{t+1}$ we define iteratively: $X_{t,i} = f_t^{-1}(x_i), k_{t,i}= |X_{t,i}|$ and for $1\leq s\leq t-1$: $X_{s,i} =  f_{s}^{-1}(X_{s+1,i}),  k_{s,i}=|X_{s,i}|$.
Then $$(f_t\circ\cdots \circ f_1)^{-1}(x_i) = X_{1,i}$$ and therefore $$|(f_t\circ\cdots \circ f_1)^{-1}(x_i)| = k_{1,i}$$ Now, for every $1 \leq s\leq t-1$ there are exactly $\binom{n_s}{k_{s,1},\ldots,k_{s,n_{t+1}}}k_{s+1,1}^{k_{s,1}}\cdots k_{s+1,n_{t+1}}^{k_{s,n_{t+1}}}$ functions $g\colon X_s\to X_{s+1}$ such that $|g^{-1}(X_{s+1,i})|=k_{s,i}, 1\leq i\leq n_{t+1}$.
\end{proof}

\paragraph{Proof of Theorem \ref{hsg}:} We proceed by induction on $t$. For the induction step we shall need the following identity from which we shall also deduce the case $t=1$: Let $m,n\in\N$ and let $k_1,\ldots,k_n\geq 0$. Denote $r=\sum_{i=1}^nk_i$. Then \begin{equation}\label{en}\sum_{\substack{l_{1},\ldots,l_n\geq0\\
l_{1}+\cdots+l_n=m}}\binom{m}{l_{1},\ldots,l_{n}}k_{1}^{l_{1}}\cdots k_{n}^{l_{n}}\left(\sum_{i=1}^{n}l_{i}^{2}\right)=m(m-1)r^{m-2}\sum_{i=1}^nk_i^2+mr^m.\end{equation} Indeed,

{\scriptsize
\begin{align}
&\sum_{\substack{l_{1},\ldots,l_n\geq0\\
l_{1}+\cdots+l_n=m}}\binom{m}{l_{1},\ldots,l_{n}}k_{1}^{l_{1}}\cdots k_{n}^{l_{n}}\left(\sum_{i=1}^{n}l_{i}^{2}\right)\nonumber\\=&\sum_{i=1}^n\sum_{\substack{l_{1},\ldots,l_{n}\geq0\\l_{1}+\cdots+l_m=n}}\binom{m}{l_{1},\ldots,l_{n}}l_{i}^{2}k_{1}^{l_{1}}\cdots k_{n}^{l_{n}}\nonumber\\=&\sum_{i=1}^n\Bigg(\sum_{\substack{l_{1}\geq0,\ldots,l_{i}=1,\ldots,l_{n}\geq0\\
l_{1}+\cdots+l_{n}=m}}\binom{m}{l_{1},\ldots,l_{n}}l_{i}^{2}k_{1}^{l_{1}}\cdots k_{n}^{l_{n}}+\sum_{\substack{l_{1}\geq0,\ldots, l_{i}\geq2,\ldots,l_{n}\geq0\\ l_{1}+\cdots+l_{n}=m}}\binom{m}{l_{1},\ldots,l_{n}}l_{i}^{2}k_{1}^{l_{1}}\cdots k_{n}^{l_{n}}\Bigg)\nonumber\\=&\sum_{i=1}^n\Bigg(mk_{i}(r-k_{i})^{m-1}+m\sum_{\substack{l_{1}\geq0,\ldots,l_{i}\geq2,\ldots,l_{n}\geq0\\
l_{1}+\cdots+l_{n}=m}}\binom{m-1}{l_{1},\ldots,l_i-1,\ldots,l_{n}}(l_{i}-1+1)k_{1}^{l_{1}}\cdots k_{n}^{l_{n}}\Bigg)\nonumber\\=&\sum_{i=1}^n\Bigg(mk_{i}(r-k_{i})^{m-1}+k_{i}^{2}m(m-1)\sum_{\substack{l_{1}\geq0,\ldots, l_{i}-2\geq 0,\ldots,l_{n}\geq0\\
l_{1}+\cdots+l_i-2+\cdots+l_{n}=m-2}}\binom{m-2}{l_{1},\ldots,l_i-2,\ldots,l_{n}}k_{1}^{l_{1}}\cdots k_{i}^{l_{i}-2}\cdots k_{n}^{l_{n}}+\nonumber \\ &\hspace{124pt} mk_i\sum_{\substack{l_{1}\geq0,\ldots, l_{i}-1\geq1,\ldots,l_{n}\geq0\\
l_{1}+\cdots+l_i-1+\cdots+l_{n}=m-1}}\binom{m-1}{l_{1},\ldots,l_i-1,\ldots,l_{n}}k_{1}^{l_{1}}\cdots k_{i}^{l_{i}-1}\cdots k_{n}^{l_{n}}\Bigg)\nonumber\\=&m\sum_{i=1}^nk_{i}\Bigg((r-k_{i})^{m-1}+k_{i}(m-1)r^{m-2}+\sum_{\substack{l_{1}\geq0,\ldots, l_{i}\geq1,\ldots,l_{n}\geq0\\
l_{1}+\cdots+l_{n}=m-1}}\binom{m-1}{l_{1},\ldots,l_i,\ldots,l_{n}}k_{1}^{l_{1}}\cdots k_{i}^{l_{i}}\cdots k_{n}^{l_{n}}\Bigg)	\nonumber\\=&m\sum_{i=1}^nk_i\Bigg((r-k_i)^{m-1}+k_i(m-1)r^{m-2}+\sum_{\substack{l_{1},\ldots,l_n\geq0\\
l_{1}+\cdots+l_n=m-1}}\binom{m-1}{l_{1},\ldots,l_n}k_{1}^{l_{1}}\cdots k_{n}^{l_n}-\sum_{\substack{l_{1}\geq0,\ldots, l_{i}=0,\ldots,l_{n}\geq0\\
l_{1}+\cdots+l_{n}=m-1}}\binom{m-1}{l_{1},\ldots,l_n}k_{1}^{l_{1}}\cdots k_n^{l_n}\Bigg)\nonumber\\
=&m\sum_{i=1}^nk_i\left((r-k_i)^{m-1}+k_i(m-1)r^{m-2}+r^{m-1}-(r-k_i)^{m-1}\right) \nonumber \\ = &m(m-1)r^{m-2}\sum_{i=1}^nk_i^2+mr^m.\nonumber
\end{align}}%
Let $t=1$. It holds  \begin{align}\mathcal{D}(X_1,X_2) \overset{\textnormal{Lemma } \ref{iy}}{=}& \frac{1}{n_1n_2^{n_1}}\sum_{\substack{k_{1,1},\ldots,k_{1,n_2}\geq1\\
k_{1,1}+\cdots+k_{1,n_2}=n_1}}\binom{n_1}{k_{1,1},\ldots,k_{1,n_2}}\left(\sum_{i=1}^{n_2}k_{1,i}^{2}\right)\nonumber\\\overset{(\ref{en})}{=}&\frac{n_1(n_1-1)n_2^{n_1-2}n_2+n_1n_2^{n_1}}{n_1n_2^{n_1}}\nonumber\\=&\frac{n_1+n_2-1}{n_2}\nonumber\nonumber\\=&\frac{n_1n_2-(n_1-1)(n_2-1)}{n_2}\nonumber.\end{align} 
Suppose now that the claim holds for $t\in\N$. We prove that it holds for $t+1$:
{\scriptsize\begin{align}
&\mathcal{D}(X_1,\ldots,X_{t+2})\nonumber \\ \overset{\textnormal{Lemma } \ref{iy}}{=}& \frac{1}{n_1\prod_{s=1}^{t+1}n_{s+1}^{n_{s}}}\sum_{\substack{k_{t+1,1},\ldots,k_{t+1,n_{t+2}}\geq0\\
k_{t+1,1}+\cdots+k_{t+1,n_{t+2}}=n_{t+1}}}\binom{n_{t+1}}{k_{t+1,1},\ldots,k_{t+1,n_{t+2}}}\sum_{\substack{k_{t,1},\ldots,k_{t,n_{t+2}}\geq0\\k_{t,1}+\cdots+k_{t,n_{t+2}}=n_{t}}}\binom{n_{t}}{k_{t,1},\ldots,k_{t+1,n_{t+2}}}k_{t+1,1}^{k_{t,1}}\cdots k_{t+1,n_{t+2}}^{k_{t,n_{t+2}}}\cdots\nonumber\\&\hspace{200pt}\sum_{\substack{k_{1,1},\ldots,k_{1,n_{t+2}}\geq0\\k_{1,1}+\cdots+k_{1,n_{t+2}}=n_1}}\binom{n_1}{k_{1,1},\ldots,k_{1,n_{t+2}}}k_{2,1}^{k_{1,1}}\cdots k_{2,n_{t+2}}^{k_{1,n_{t+2}}} \left(\sum_{i=1}^{n_{t+2}}k_{1,i}^{2}\right)\nonumber\\\overset{(\ref{en})}{=}& \frac{1}{n_1\prod_{s=1}^{t+1}n_{s+1}^{n_{s}}}\sum_{\substack{k_{t+1,1},\ldots,k_{t+1,n_{t+2}}\geq0\\
k_{t+1,1}+\cdots+k_{t+1,n_{t+2}}=n_{t+1}}}\binom{n_{t+1}}{k_{t+1,1},\ldots,k_{t+1,n_{t+2}}}\sum_{\substack{k_{t,1},\ldots,k_{t,n_{t+2}}\geq0\\k_{t,1}+\cdots+k_{t,n_{t+2}}=n_{t}}}\binom{n_{t}}{k_{t,1},\ldots,k_{t+1,n_{t+2}}}k_{t+1,1}^{k_{t,1}}\cdots k_{t+1,n_{t+2}}^{k_{t,n_{t+2}}}\cdots\nonumber\\&\hspace{150pt}\sum_{\substack{k_{2,1},\ldots,k_{2,n_{t+2}}\geq0\\k_{2,1}+\cdots+k_{2,n_{t+2}}=n_2}}\binom{n_2}{k_{2,1},\ldots,k_{2,n_{t+2}}}k_{3,1}^{k_{2,1}}\cdots k_{3,n_{t+2}}^{k_{2,n_{t+2}}} \left(n_1(n_1-1)n_2^{n_1-2}\sum_{i=1}^{n_{t+2}}k_{2,i}^2+n_1n_2^{n_1}\right)\nonumber\\=&\frac{n_1n_2^{n_1}\prod_{s=2}^{t+1}n_{s+1}^{n_{s}}}{n_1\prod_{s=1}^{t+1}n_{s+1}^{n_{s}}}+\nonumber\\&\frac{n_1-1}{n_{2}^2\prod_{s=2}^{t+1}n_{s+1}^{n_{s}}}\sum_{\substack{k_{t+1,1},\ldots,k_{t+1,n_{t+2}}\geq0\\
k_{t+1,1}+\cdots+k_{t+1,n_{t+2}}=n_{t+1}}}\binom{n_{t+1}}{k_{t+1,1},\ldots,k_{t+1,n_{t+2}}}\sum_{\substack{k_{t,1},\ldots,k_{t,n_{t+2}}\geq0\\k_{t,1}+\cdots+k_{t,n_{t+2}}=n_{t}}}\binom{n_{t}}{k_{t,1},\ldots,k_{t+1,n_{t+2}}}k_{t+1,1}^{k_{t,1}}\cdots k_{t+1,n_{t+2}}^{k_{t,n_{t+2}}}\cdots\nonumber\\&\hspace{150pt}\sum_{\substack{k_{2,1},\ldots,k_{2,n_{t+2}}\geq0\\k_{2,1}+\cdots+k_{2,n_{t+2}}=n_2}}\binom{n_2}{k_{2,1},\ldots,k_{2,n_{t+2}}}k_{3,1}^{k_{2,1}}\cdots k_{3,n_{t+2}}^{k_{2,n_{t+2}}} \left(\sum_{i=1}^{n_{t+2}}k_{2,i}^2\right)\nonumber\\=&1+(n_1-1)\frac{\prod_{s=2}^{t+2}n_s-\prod_{s=2}^{t+2}(n_s-1)}{n_2\prod_{s=3}^{t+2}n_s}\nonumber\\=&\frac{\prod_{s=1}^{t+2}n_{s}-\prod_{s=1}^{t+2}(n_{s}-1)}{\prod_{s=2}^{t+2}n_{s}}.\nonumber
\end{align}}%
\qed

Before we prove Theorem \ref{tha}, let us complement \cite[Lemma 1.2]{defant2020quantifying}:

\begin{lemma}\label{l; 22}
Let $f\colon X\to Y$. Then $$\max_{y\in Y}\{|f^{-1}(y)|\}\leq \sqrt{n}\sqrt{\deg(f)}.$$
\end{lemma}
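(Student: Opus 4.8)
The plan is to reduce everything to the second, equivalent formulation of the degree of noninvertibility recalled just before Theorem~\ref{dse}, namely $\deg(f)=\frac{1}{|X|}\sum_{y\in Y}|f^{-1}(y)|^2$, and then to observe that the squared maximal fiber size is nothing more than a single (the largest) summand of this sum. So first I would set $n=|X|$ and choose $y_0\in Y$ attaining the maximum, so that $|f^{-1}(y_0)|=\max_{y\in Y}\{|f^{-1}(y)|\}$.

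The key step is then the trivial comparison of one nonnegative term with the whole sum of nonnegative terms. Since each summand $|f^{-1}(y)|^2$ is at least $0$, the summand indexed by $y_0$ cannot exceed the full sum, giving
$$\left(\max_{y\in Y}\{|f^{-1}(y)|\}\right)^2=|f^{-1}(y_0)|^2\leq \sum_{y\in Y}|f^{-1}(y)|^2 = n\,\deg(f).$$
Taking (nonnegative) square roots of both sides immediately yields $\max_{y\in Y}\{|f^{-1}(y)|\}\leq\sqrt{n}\sqrt{\deg(f)}$, which is exactly the claim.

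I do not anticipate a genuine obstacle: the entire content is the elementary fact that the maximum of a finite family of nonnegative reals is bounded above by their sum, here applied to the numbers $|f^{-1}(y)|^2$. The only point worth stating carefully is the use of the squared formulation of $\deg(f)$ rather than the original definition, since it is the sum of squares that makes the single dominant term appear directly. This lemma is precisely what is needed to see, via Theorem~\ref{tha}, that the bound strengthens \cite[Theorem 3.4]{defant2020quantifying} in the case $Y=Z=X$, as remarked after the statement of Theorem~\ref{tha}.
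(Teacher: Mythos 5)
Your proof is correct and is essentially the paper's own argument: the paper applies the norm inequality $\|x\|_\infty\leq\|x\|_2$ to the vector of fiber sizes, which is exactly your observation that the largest summand $|f^{-1}(y_0)|^2$ is bounded by the full sum $\sum_{y\in Y}|f^{-1}(y)|^2=n\deg(f)$. You have merely unpacked that norm inequality into its elementary form, so nothing further is needed.
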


\begin{proof}
Applying the inequality $$||x||_\infty \leq ||x||_2, \;\;\forall x\in\R^m$$ on the vector in $\R^m$ whose entries correspond to the sizes of the preimages under $f$ of all the elements of $Y$ we obtain
$$\max_{y\in Y}\{|f^{-1}(y)|\}\leq \sqrt{\sum_{y\in Y}|f^{-1}(y)|^2} =\sqrt{n}\sqrt{\frac{1}{n}\sum_{y\in Y}|f^{-1}(y)|^2}= \sqrt{n}\sqrt{\deg(f)}.$$
\end{proof}

Only a small modification of the proof of  \cite[Theorem 3.4]{defant2020quantifying} is necessary to prove Theorem \ref{tha}. We give the full proof for completeness:

\paragraph{Proof of Theorem \ref{tha}:}
Let $z_1,\ldots,z_r\in Z$ be such that $f(g(X)) = \{z_1,\ldots,z_r\}$. For every $1\leq i\leq r$ denote $k_i = |f^{-1}(z_i)|$ and let $y_{i1},\ldots, y_{ik_i}\in Y$ be such that $f^{-1}(z_i)=\{y_{i1},\ldots, y_{ik_i}\}$. Furthermore, for $1\leq i\leq r$ and $1\leq j\leq k_i$ denote $l_{ij}=|g^{-1}(y_{ij})|$. It holds \begin{align}
    \deg(f\circ g)=&\frac{1}{n}\sum_{i=1}^r |g^{-1}(f^{-1}(z_i))|^2\nonumber\\=&\frac{1}{n}\sum_{i=1}^r \left(\sum_{j=1}^{k_i}l_{ij}\right)^2\nonumber\\\labelrel\leq{myeq:equality99}&\frac{1}{n}\sum_{i=1}^rk_i \sum_{j=1}^{k_i}l_{ij}^2\nonumber\\\leq&\max_{z\in Z}\{|f^{-1}(z)|\}\overbrace{\frac{1}{n}\sum_{i=1}^r \sum_{j=1}^{k_i}l_{ij}^2}^{\labelrel={myeq:equality100}\deg(g)}\nonumber\\=&\max_{z\in Z}\{|f^{-1}(z)|\}\deg(g).\nonumber
\end{align} where in \eqref{myeq:equality99} we used the Cauchy-Schwarz inequality and \eqref{myeq:equality100} is due to the fact that $$g(X)\subseteq f^{-1}(f(g(X))) = \{y_{ij} \;|\; 1\leq i\leq r, 1\leq j\leq k_i\}.$$
\qed

\paragraph{Proof of Theorem \ref{dse}} We proceed by (complete) induction on $q$ and prove that for every $n\in\N$ and $q\in\N\cup\{0\}$ it holds $$\sum_{\substack{k_{1},\dots,k_m\geq0\\
k_{1}+\cdots+k_m=n}}\binom{n}{k_{1},\dots,k_m}\left(\sum_{i=1}^mk_{i}^{q}\right)=\begin{cases}nm^{n-(q-1)}\sum_{k=1}^{q}{q \brace k}\left(\sum_{j=1}^{k}(-1)^{k-j}{k \brack j}n^{j-1}\right)m^{q-k}& q>0\\ m^{n+1} &q=0.\end{cases}$$ The casses $q=0,1$ are trivial. Let $1<q\in\N$ and suppose that the assertion holds for every $m,n\in\N$ and every $0\leq r\leq q$. We shall prove that it holds for every $m,n\in\N$ and for $q+1$: 
\begin{align}
&\sum_{\substack{k_{1},\dots,k_m\geq0\\
k_{1}+\cdots+k_m=n}}\binom{n}{k_{1},\dots,k_m}\left(\sum_{i=1}^mk_{i}^{q+1}\right)\nonumber\\=&n\sum_{i=1}^m\sum_{\substack{k_{1}\geq 0,\dots,k_i\geq 1,\ldots,k_m\geq0\\k_{1}+\cdots+k_m=n}}\binom{n-1}{k_{1},\dots,k_{i}-1,\dots,k_m}k_{i}^q\nonumber\\=&n\sum_{i=1}^m\sum_{\substack{k_{1}\geq 0,\dots,k_i-1\geq 0,\ldots,k_m\geq0\\k_{1}+\cdots+k_m=n-1}}\binom{n-1}{k_{1},\dots,k_{i}-1,\dots,k_m}(k_{i}-1+1)^q\nonumber\\=&n\sum_{r=0}^q\binom{q}{r}\sum_{i=1}^m\sum_{\substack{k_{1}\geq 0,\dots,k_i-1\geq 0,\ldots,k_m\geq0\\k_{1}+\cdots+k_m=n-1}}\binom{n-1}{k_{1},\dots,k_{i}-1,\dots,k_m}(k_{i}-1)^r\nonumber\\=&n\sum_{r=0}^q\binom{q}{r}\sum_{\substack{k_{1},\ldots,k_m\geq0\\k_{1}+\cdots+k_m=n-1}}\binom{n-1}{k_{1},\dots,k_m}\left(\sum_{i=1}^mk_i^r\right)\nonumber\\\labelrel={myeq:equality}&n\sum_{r=1}^q\binom{q}{r} (n-1)m^{n-r}\sum_{k=1}^{r}{r\brace k} \left(\sum_{j=1}^{k}(-1)^{k-j}{ k\brack j}(n-1)^{j-1}\right)m^{r-k}+nm^n\nonumber\\=&nm^{n-t}\sum_{k=1}^{q}\overbrace{\sum_{r=k}^q\binom{q}{r} {r\brace k}}^{\labelrel={myeq:equality1} {q+1\brace k+1}} \left(\sum_{j=1}^{k}(-1)^{k-j}{ k\brack j}\overbrace{(n-1)^j}^{=\sum_{l=0}^j\binom{j}{l}(-1)^ln^{j-l}}\right)m^{q-k}+nm^n\nonumber\\=&nm^{n-q}\sum_{k=1}^{q} {q+1\brace k+1} \left(\overbrace{\sum_{j=1}^{k}\sum_{l=0}^j(-1)^{k+l-j}\binom{j}{l}{ k\brack j}n^{j-l}}^{\labelrel={myeq:equality2}\sum_{j=0}^{k}(-1)^{k-j}{k+1 \brack j+1}n^{j}}\right)m^{q-k}+nm^{n-q}m^q\nonumber\\=&nm^{n-q}\left(\sum_{k=1}^{q} {q+1\brace k+1} \left(\sum_{j=0}^{k}(-1)^{k-j}{k+1 \brack j+1}n^{j}\right)m^{q-k}+m^q\right)\nonumber\\=&nm^{n-q}\sum_{k=1}^{q+1} {q+1\brace k} \left(\sum_{j=1}^{k}(-1)^{k-j}{k \brack j}n^{j-1}\right)m^{q+1-k}\nonumber
\end{align} where in \eqref{myeq:equality} we used the induction hypothesis,  \eqref{myeq:equality1} is due to \cite[(6.15)]{graham1989concrete} and \eqref{myeq:equality2} follows after several algebraic manipulations together with \cite[(6.16)]{graham1989concrete}.
\qed

\paragraph{Proof of Corollary \ref{coj}:}
It is straightforward to derive the asserted identity from Theorem \ref{dse}. Now, recall (e.g., \cite{bernstein1995some}) that if $(a_l)_{l\in\N}$ is a sequence of real numbers then the Stirling transform $(b_l)_{l\in\N}$ of $(a_l)_{l\in\N}$ is given by $$b_l = \sum_{i=1}^l{l \brace i}a_i,\;\;\forall l\in\N.$$ Setting ${x \brack y}=0$ for every $x\in \N$ and $y\in \Z$ such that $y\leq 0$ we have 
\begin{align}
(-1)^k\left(\sum_{j=1}^{q-k}{q \brace k+j}{k+j \brack j}\right) =\sum_{j=1}^{q}{q \brace j}(-1)^{j-(j-k)}{j \brack j-k} \nonumber. 
\end{align}

\paragraph{Acknowledgements}
We are grateful to James Propp for suggesting us the consideration of compositions of functions.

\bibliography{bibliography}
\bibliographystyle{plainnat}

\end{document}